
\documentclass[12pt,a4paper]{amsart}
\usepackage{amsmath,amssymb}
\usepackage{amsthm,graphicx, color}
\usepackage[T1]{fontenc}
\usepackage{float}

\overfullrule=5pt

\newcommand{\NN}{\mathbb{N}}

\DeclareMathOperator{\inv}{inv}
\DeclareMathOperator{\fix}{fix}

\newtheorem{thm}{Theorem}
\newtheorem{lemma}[thm]{Lemma}

\newtheorem*{problem}{Problem}

\newcommand{\Sym}{\mathfrak{S}}
\newcommand{\Inv}{\mathfrak{I}}

\allowdisplaybreaks

\title[On $t$-extensions of the Hankel determinants]{On $t$-extensions of the Hankel determinants of certain automatic sequences}  
\date{February 6, 2014}
\author{Hao Fu}
\address{Institute for Interdisciplinary Information Sciences\\
Tsinghua University\\
Beijing,100084\\
P.R.China}
\email{fuhaots1992@gmail.com}

\keywords{Hankel determinant, $t$-Hankel determinant, permutation, involution,
Thue-Morse sequence,  period-doubling sequence,
regular paperfolding sequence}
\subjclass[2010]{05A05, 05A10, 05A15, 05A19, 11B50, 11B65, 11B85, 15A15.}

\author{Guo-Niu HAN}
\address{Institut de Recherche Math\'ematique Avanc\'ee\\
Universit\'e de Strasbourg et CNRS\\
7 rue Ren\'e-Descartes\\
 67084 Strasbourg\\
France}
\email{guoniu.han@unistra.fr}

\begin{document}
\begin{abstract}
In 1998,  Allouche, Peyri\`ere, Wen and Wen
considered the Thue--Morse sequence,
and proved that all the Hankel determinants of the period-doubling sequence 
are odd integral numbers. 
We speak of $t$-extension
when the entries along the diagonal in the Hankel determinant 
are all multiplied by~$t$.
Then we prove that the $t$-extension of each Hankel determinant of 
the period-doubling sequence 
is a polynomial in $t$, whose leading coefficient is
the {\it only one} to be an odd integral number.
Our proof makes use of the combinatorial set-up developed by Bugeaud and Han,
which appears to be very suitable for this study, as the parameter $t$ counts the number of fixed points of a permutation.
Finally, we prove that all the
$t$-extensions of the Hankel determinants of the regular paperfolding sequence are polynomials in $t$ of degree less than or equal to $3$. 
\end{abstract}
\maketitle

\section{Introduction} 
Hankel determinant is a very classical mathematical subject widely studied in 
Linear Algebra, Combinatorics, Number Theory and Algorithmics
(see, for example, \cite{Kr1998, Wimp2000, Layman2001, ERR2008, Ege2010}).
In particular,
the Hankel determinants of automatic sequences play an important role
in the study of irrationality exponents in Number Theory.
The first result on such determinants, 
obtained in 1998, 
is due to  Allouche, Peyri\`ere, Wen and Wen \cite{APWW1998},
who considered the Thue--Morse sequence,
and proved that all the Hankel determinants of the period-doubling sequence 
are odd integral numbers. 
This result allowed Bugeaud \cite{Bu2011} to prove that
the irrationality exponents of the Thue-Morse-Mahler numbers are exactly~2.

Let $x$ be a parameter. 
We identify each sequence ${\bf c}=(c_0, c_1, c_2, \ldots)$ with its generating function 
$C=C(x)=c_0+c_1x+c_2x^2+\cdots$. 
In general, the constant term $c_0$ will be equal to $1$.
For $k\ge 1$ and $p\ge 0$ let
\begin{equation}\label{def:Hankel}
H_{k}^{p}(C)=
H_{k}^{p}({\bf c}):=
\begin{vmatrix}
  c_p & c_{p+1}&\cdots & c_{p+k-1} \\
  c_{p+1} & c_{p+2}& \cdots & c_{p+k}\\
   \vdots & \vdots &\ddots & \vdots\\
   c_{p+k-1} & c_{p+k} & \cdots & c_{p+2k-2}
\end{vmatrix}
\end{equation}
be the $(p,k)$-order Hankel determinant of the series $C(x)$
or of the sequence ${\bf c}=(c_0, c_1, c_2, \ldots)$. 
We write $H_{k}(C):=H_{k}^{0}(C)$ for short.
The Thue--Morse sequence ${\bf e} = (1,-1,-1,1,\ldots)$ can be defined by the 
generating function
\begin{equation}\label{def:ThueMorse}
P_2(x)=\sum_{k=0}^\infty e_k x^k=\prod_{k=0}^\infty(1-x^{2^k}).
\end{equation}
Then, the {\it period-doubling sequence} ${\bf d}=(1,0,1,1,1,0,\ldots)$ 
is derived from the Thue--Morse sequence
by defining 
\begin{equation}\label{def:perioddoubling}
d_k=\frac{1}{2}|e_k - e_{k+1}| \qquad (k\geq 0). 
\end{equation}
\begin{thm}[APWW]
\label{thm:APWW}
For every positive integer $k$
the Hankel determinant $H_k({\bf d})$ of the period-doubling sequence ${\bf d}$ 
is an odd integral number.
In other words,
\begin{equation}
	H_k({\bf d}) \equiv 1 \pmod2.
\end{equation}
\end{thm}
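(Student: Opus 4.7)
The plan is to work entirely modulo $2$ and to exploit the two simple recurrences satisfied by the period-doubling sequence, namely $d_{2m}=1$ and $d_{2m+1}=1-d_m$, which follow directly from $e_{2m}=e_m$, $e_{2m+1}=-e_m$ and the definition \eqref{def:perioddoubling}. Starting from the Leibniz expansion,
\[
H_k(\mathbf{d})\equiv \sum_{\sigma\in\Sym_k}\prod_{i=1}^{k}d_{i+\sigma(i)-2}\pmod 2,
\]
the signs disappear and the determinant becomes the parity of the number of permutations $\sigma$ such that every entry $d_{i+\sigma(i)-2}$ equals~$1$. The goal is then to reduce this parity count to the analogous one for a Hankel determinant of strictly smaller order.

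Concretely, I would rearrange the rows and columns of the Hankel matrix according to the parity of the index. Over $\mathbb{F}_2$ this puts the matrix into block form
\[
\begin{pmatrix} J & E\\ E^{T} & J\end{pmatrix},
\]
where $J$ is an all-ones block (because $d_{2m}=1$ automatically) and the off-diagonal block $E$ has entries of the shape $1-d_m$ coming from the odd-indexed positions. Performing the obvious row/column reductions on the two $J$-blocks (subtract the first row/column of each block from every other row/column of that block) kills all but one row and one column in each $J$-block. What remains, after expanding along the surviving rows and columns, is an $(\lfloor k/2\rfloor -\varepsilon)\times(\lfloor k/2\rfloor-\varepsilon)$ determinant whose entries, thanks to the identity $1-d_m=d_{2m+1}$, can be re-identified — up to a sign and a controlled index shift — with a Hankel (or shifted-Hankel) determinant of $\mathbf{d}$ of smaller order. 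One then obtains a relation of the form $H_k(\mathbf{d})\equiv H_{f(k)}^{p(k)}(\mathbf{d})\pmod{2}$ with $f(k)<k$, and a parallel argument for the shifted determinants $H_k^p(\mathbf{d})$ closes the induction, the base case $H_1(\mathbf{d})=d_0=1$ being trivial.

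The main obstacle is the third step: after the row/column operations one must verify that what remains is genuinely a Hankel-type matrix of the \emph{same} sequence $\mathbf{d}$, which forces a careful bookkeeping of the index shifts introduced by rewriting $1-d_m$ as $d_{2m+1}$ and by the re-ordering of rows and columns by parity. In particular one has to split the argument according to the parity of $k$ and to follow simultaneously a small family of shifted Hankel determinants $H_k^p(\mathbf{d})$, not just $H_k(\mathbf{d})$ itself, so that the induction remains closed. Once this bookkeeping is settled, the combinatorial content of the proof reduces to linear algebra over $\mathbb{F}_2$, and the oddness of every $H_k(\mathbf{d})$ falls out as a clean corollary.
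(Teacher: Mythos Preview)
Your plan is a valid strategy, essentially the original Allouche--Peyri\`ere--Wen--Wen route of determinant manipulation, but it is \emph{not} what this paper does. Here Theorem~\ref{thm:APWW} is obtained as the specialisation $t=1$ of Theorem~\ref{thm:tAPWW}, and the latter is proved purely combinatorially: in the Leibniz expansion one pairs each non-involution with its inverse modulo~$2$, reducing to a sum over involutions whose transpositions $(c,d)$ all satisfy $c+d\in J$; the key Theorem~\ref{thm:key} then shows that only the identity involution contributes an odd amount. No row or column operations enter, and the halving of the problem size happens inside the involution count via the bijections $\beta,\delta$ of Lemma~\ref{lemma:NPQ}, not by block linear algebra. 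The combinatorial set-up is precisely what allows the fixed-point parameter $t$ to be carried through; your block method is more elementary for the bare parity statement but does not obviously deliver the refinement $H_k(\mathbf d,t)\equiv t^k$.

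Your sketch is in fact cleaner than you fear in the even case: for $k=2n$ the parity reordering gives $\bigl[\begin{smallmatrix}J&E\\E&J\end{smallmatrix}\bigr]$ with $E$ symmetric and $(J-E)_{a,b}=d_{a+b}$, and one block-row and one block-column operation yield $\det(J-E)\det(J+E)\equiv H_n(\mathbf d)^2\equiv H_n(\mathbf d)\pmod 2$ in a single stroke. The odd case $k=2n+1$ is where the bookkeeping really lives: the two all-ones blocks have different sizes, both are singular so neither Schur complement is available, and the minor left after your reductions is not literally a Hankel matrix of $\mathbf d$. Closing the induction will require at least one auxiliary family such as $H_k^1(\mathbf d)$, exactly as in the original APWW proof, and you should expect more than the single relation $H_k\equiv H_{f(k)}^{p(k)}$ you propose before the system becomes self-contained.
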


Coons~\cite{Coons2013} considered the series 
\begin{equation}
\label{def:G00}
G_{0,0}(x):=\sum_{n=0}^{\infty}\frac{x^{2^{n}-1}}{1-x^{2^{n}}}
\end{equation}
and proved that 
all the Hankel determinants $H_k({G_{0,0}})$ of the power series $G_{0,0}(x)$  
are odd integral numbers. 
As shown in \cite{BH2013}, Coons's result
is essentially equivalent to Theorem \ref{thm:APWW}.
\medskip

Let $t$ be a parameter.
We speak of {\it $t$-extension}
when the entries along the diagonal in the $(p,k)$-order Hankel determinant 
are all multiplied by~$t$. 
In other words, we define the {\it $t$-Hankel determinant} of the formal power series $C(x)=c_0+c_1x+c_2x^2+\cdots$ by
\begin{equation}\label{def:tHankel}
H_{k}^{p}(C,t):=
\begin{vmatrix}
  c_pt & c_{p+1}&\cdots & c_{p+k-1} \\
  c_{p+1} & c_{p+2}t& \cdots & c_{p+k}\\
   \vdots & \vdots &\ddots & \vdots\\
   c_{p+k-1} & c_{p+k} & \cdots & c_{p+2k-2}t
\end{vmatrix}.
\end{equation}
Obviously, the above $t$-Hankel determinant (\ref{def:tHankel})
is a polynomial in $t$ of degree less than or equal to~$k$, 
which is equal to the traditional Hankel determinant (\ref{def:Hankel})
when $t=1$.
Again, we write $H_{k}(C,t):=H_{k}^{0}(C,t)$.
Our main result is stated as follows.
\begin{thm}
\label{thm:tAPWW}
For every positive integer $k$
the $t$-Hankel determinant $H_k({\bf d},t)$ of the period-doubling sequence ${\bf d}$ 
is a polynomial in $t$ of degree~$k$, whose leading coefficient is
the only one to be an odd integral number. In other words,
\begin{equation}
\label{equa:th}
H_{k}({\bf d},t)\equiv t^{k} \pmod2.
\end{equation}
\end{thm}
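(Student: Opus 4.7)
The plan is to combine the Leibniz expansion of the $t$-Hankel determinant, in which the power of $t$ records the number of fixed points of the indexing permutation, with the combinatorial framework of Bugeaud--Han and the self-similar structure of the period-doubling sequence. First I would write
\[
H_k({\bf d},t)=\sum_{\sigma\in\Sym_k}\sgn(\sigma)\,t^{\fix(\sigma)}\prod_{i=1}^{k}d_{i+\sigma(i)-2}.
\]
The leading coefficient, that is the coefficient of $t^k$, is produced only by $\sigma=\id$ and equals $\prod_{i=1}^{k}d_{2(i-1)}$. Since the binary digit sums $s_2(2j)$ and $s_2(2j+1)$ always have opposite parity, the defining formula $d_k=\tfrac12|e_k-e_{k+1}|$ gives $d_{2j}=1$ for every $j\ge 0$. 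Hence this leading coefficient equals $1$, accounting for the $t^k$ on the right-hand side of~\eqref{equa:th}.

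It remains to prove that every coefficient of $t^j$ with $j<k$ is even. Reducing the Leibniz expansion modulo~$2$ removes the signs, so the task becomes showing that for each $j<k$ the number
\[
N_{k,j}\;=\;\#\Bigl\{\sigma\in\Sym_k\;:\;\fix(\sigma)=j\text{ and }d_{i+\sigma(i)-2}=1\text{ for every }i\Bigr\}
\]
is even. Using the characterization $d_m=1\Leftrightarrow v_2(m+1)$ is even, this becomes a statement about permutations satisfying $2$-adic parity constraints on the placements $i+\sigma(i)-1$, with the parameter $j$ prescribing the number of fixed points. In the Bugeaud--Han setup, this admissibility condition matches the combinatorial objects already counted by the $t$-Hankel determinant, so it is natural to search for a fixed-point-free, parity-cancelling involution on the admissible non-identity permutations.

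The main obstacle is the construction of such an involution $\Phi$ that preserves both $\fix(\sigma)$ and the full product $\prod_i d_{i+\sigma(i)-2}$. The idea is to exploit the automatic recursion $d_{2k}=1$ and $d_{2k+1}\equiv 1+d_k\pmod{2}$, which produces a block structure on the period-doubling Hankel matrix analogous to the one used in the original APWW proof of Theorem~\ref{thm:APWW}. A plausible candidate for $\Phi$ is to locate the smallest non-fixed index of $\sigma$ and swap it with a canonical partner determined by the $2$-adic valuation of $i+\sigma(i)-1$. Checking that $\Phi$ is involutive, free of fixed points on the admissible non-identity permutations, respects $\fix$, and preserves the admissibility constraint coordinate-wise, is the delicate part; I expect the cleanest route to be an induction on~$k$ parallel to APWW, replacing their parity-of-integers argument with a parity-of-polynomials argument that tracks the weight $t^{\fix(\sigma)}$ throughout.
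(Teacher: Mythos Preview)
Your setup is correct and matches the paper: the Leibniz expansion with $t^{\fix(\sigma)}$ tracking fixed points, the observation that the identity gives leading coefficient $\prod_j d_{2j}=1$, and the reduction of the remaining claim to a parity count of admissible permutations. But the heart of the argument is missing. You propose to build a fixed-point-free involution $\Phi$ on the admissible non-identity permutations, give only a vague candidate (``swap the smallest non-fixed index with a canonical partner determined by a $2$-adic valuation''), and then defer the verification to an unspecified induction ``parallel to APWW''. None of this is carried out, and the candidate as stated does not obviously preserve $\fix(\sigma)$ or admissibility.

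The paper's proof supplies exactly the ingredient you are missing, and it is not a single local swap. First, the reduction modulo~$2$ is done by pairing $\sigma$ with $\sigma^{-1}$: since $\fix(\sigma)=\fix(\sigma^{-1})$ and the admissibility condition $i+\sigma(i)\in J$ is symmetric, all non-involutions cancel, so one must only count \emph{involutions} whose transpositions lie in~$J$. This step alone already forces $H_k({\bf d},t)\bmod 2$ to be a polynomial in $t$ of the form $\sum_i \mu(N|_k,i,J)\,t^{k-2i}$. Second, the parity of $\mu(N|_k,i,J)$ for $i\ge 1$ is established by a genuine induction (Theorem~\ref{thm:key}) that passes through auxiliary index sets $P$, $Q$, $L$, $J^{\ast}$ via the bijections of Lemma~\ref{lemma:NPQ}, a factorisation over $P$ and $Q$ (Lemma~\ref{lemma:2n_n}), a marked-involution identity (Lemma~\ref{lemma:sumP}), and two binomial congruences. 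In short, the ``delicate part'' you flag is the entire content of the theorem; until you either construct a concrete $\Phi$ and verify its properties, or reproduce an argument equivalent to Theorem~\ref{thm:key}, the proposal remains an outline rather than a proof.
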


In the following table we reproduce the first values of the $t$-Hankel determinants 
of the period-doubling sequence ${\bf d}$. 
We see that all the coefficients are even integral numbers, except 
the coefficient of $t^k$.
When $t=1$ we recover Theorem \ref{thm:APWW}.

\bigskip

\centerline{
\begin{tabular}{|c|c|c|c|}
	  \hline
		$k$ & $H_k({\bf d},t)$ & $H_k({\bf d},t) \pmod2$ & $H_k({\bf d},1)$ \\
	 \hline
	 $0$ & $1$ & $1$ &$1$ \\
	  $1$ & $t$ &  $t$ &$1$ \\
	  $2$ & $t^2$ &  $t^2$ &$1$ \\
	  $3$ & $t^3 - 2t$ &  $t^3$ &$-1$ \\
	  $4$ & $t^4 - 4t^2$   &  $t^4$ &$-3$ \\
	  $5$ & $t^5 - 6t^3 + 2t^2 + 4t$   &  $t^5$ &$1$ \\
	  $6$ & $t^6 - 8t^4 + 4t^3 + 12t^2 - 8t$  &  $t^6$ &$1$ \\
	  $7$ & $t^7 - 12t^5 + 10t^4 + 24t^3 - 24t^2$ &  $t^7$ &$-1$ \\
	  $8$ & $t^8 - 16t^6 + 16t^5 + 48t^4 - 64t^3$ &  $t^8$ &$-15$\\
	  \hline
\end{tabular}
}
\bigskip
Actually, Theorem \ref{thm:APWW} has three proofs. The first one is due to 
Allouche, Peyri\`ere, Wen and Wen by using
determinant manipulation \cite{APWW1998},
which consists of proving sixteen recurrence relations between determinants.
The second one is a combinatorial proof derived by Bugeaud and Han \cite{BH2013}.
The third proof is very short by using the Jacobi continued fraction algebra \cite{Han2013}.
For proving Theorem \ref{thm:tAPWW} it seems 
the method used in the second proof is more suitable, as
the parameter $t$ counts the number of fixed points of permutations.

Some basic notations and properties on permutations and involutions
are collected in Section \ref{section:permutation}, 
including the statement of the key combinatorial result, namely, Theorem \ref{thm:key}. 
The proof of the main result
(Theorem \ref{thm:tAPWW}) is found in
Section \ref{section:tAPWW}, after proving Theorem \ref{thm:key}.

\medskip

The {\it regular paperfolding sequence} ${\bf r}=(1,1,0,1,1,0,0,\ldots)$
can be defined by the generating function \cite{WikipediaRP, Allouche1987}
\begin{equation}\label{def:G02}
G_{0,2}(x)=\sum_{n\geq 0} r_{n} x^n = \sum_{n=0}^\infty 
\frac{x^{2^n-1}}{1-x^{2^{n+2}}}.
\end{equation}
Coons and Vrbik conjectured \cite{CoonsVrbik2012} and Guo, Wu and Wen \cite{GWW2013} 
proved the following result.
\begin{thm}[GWW]
	\label{thm:GWW}
The Hankel determinants of the regular paperfolding sequence ${\bf r}$ are periodic of period $10$. More precisely, we have
\begin{equation}\label{eq:period10}
(H_k({\bf r}))_{k=0,1,\ldots} \equiv (1,1,1,0,0,1,0,0,1,1)^* \pmod 2.
\end{equation}
\end{thm}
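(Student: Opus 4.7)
The plan is to prove Theorem~\ref{thm:GWW} by adapting the combinatorial machinery that underlies the proof of Theorem~\ref{thm:tAPWW}, namely the permutation expansion of the Hankel determinant and its reduction to a sum over involutions, to the regular paperfolding sequence. From the Leibniz expansion
\begin{equation*}
H_k(\mathbf{r})\equiv \sum_{\sigma\in \Sym_k}\prod_{i=1}^k r_{i+\sigma(i)-2}\pmod{2},
\end{equation*}
the involution $\sigma\mapsto\sigma^{-1}$ pairs up the contributions of $\sigma$ and $\sigma^{-1}$ whenever $\sigma\neq\sigma^{-1}$, so only elements of $\Inv_k$ survive modulo~$2$. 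Thus $H_k(\mathbf{r})\pmod 2$ is the parity of the number of involutions $\sigma\in\Inv_k$ that are \emph{admissible} in the sense $r_{i+\sigma(i)-2}=1$ for every $i$.

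The first step is to parametrise the support $S=\{m:r_m=1\}$ explicitly using the identities $r_{4n}=1$, $r_{4n+2}=0$, $r_{2n+1}=r_n$, which follow from the functional equation
\begin{equation*}
G_{0,2}(x)=\frac{1}{1-x^4}+xG_{0,2}(x^2).
\end{equation*}
This reduces admissibility to a $2$-adic constraint on $i+\sigma(i)-2$: fixed points must lie at odd positions, and an arc $\{i,j\}$ is allowed only when $i+j-2\in S$, itself subject to the same self-similar description.

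The second step is to classify admissible involutions according to the parity classes of the endpoints of each arc (odd-odd, odd-even, even-even) and to exploit $r_{2n+1}=r_n$ to fold the odd-index contributions back onto $\mathbf{r}$ at smaller indices, and $r_{4n}=1$, $r_{4n+2}=0$ to resolve the even-index contributions. This should produce a linear recursion modulo~$2$ on a bounded family of ``shifted'' quantities $H_k^p(\mathbf{r})$ for $p$ in a finite set. The third step is to show that this recursion forces periodicity of period $10$ and to check the first ten values against $(1,1,1,0,0,1,0,0,1,1)$ by direct computation.

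The main obstacle is Steps~2--3: closing the mod-$2$ recursion on a sufficiently small family of generalised determinants and pinning down the period as exactly $10$ rather than a divisor or multiple of it. A practical way to finish, in the spirit of the sixteen recurrence relations of \cite{APWW1998}, would be to introduce finitely many shifted determinants $H_k^p(\mathbf{r})$ indexed by a bounded set of offsets~$p$, derive a closed system of congruences among them from the parity-class analysis above, and verify by a finite computation that the associated mod-$2$ transition matrix $M$ satisfies $M^{10}\equiv I\pmod 2$ on the initial data relevant to $p=0$.
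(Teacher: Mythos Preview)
The paper does not contain a proof of Theorem~\ref{thm:GWW}; it is quoted as a result of Guo, Wu and Wen, and the only pointer to a proof within this circle of ideas is the reference to the automated proof in~\cite{Han2014}. So there is no ``paper's own proof'' to compare your proposal to.

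That said, your proposal is not a proof either. Step~1 (reducing $H_k(\mathbf r)\pmod 2$ to a count of admissible involutions) is correct and is exactly the same reduction used in Section~\ref{section:tAPWW}; note only that with the paper's $0$-indexed convention the Leibniz product is $\prod_{i=0}^{k-1} r_{i+\sigma(i)}$, not $\prod_{i=1}^{k} r_{i+\sigma(i)-2}$. Steps~2--3, however, are precisely where the work lies, and you have only described a hope: split arcs by parity classes, fold via $r_{2n+1}=r_n$, and hope that a finite system of shifted determinants $H_k^p(\mathbf r)$ closes up with a transition matrix of order~$10$. You neither exhibit such a system nor show that it closes, let alone that the order is~$10$.

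There is good reason to think this gap is real and not just presentational. The paper explicitly states as an open \emph{Problem} the task of finding a $t$-extension of Theorem~\ref{thm:GWW}, i.e., a property of $H_k(\mathbf r,t)$ that specialises to~(\ref{eq:period10}) at $t=1$. In other words, the authors themselves could not push the involution/fixed-point machinery far enough to recover Theorem~\ref{thm:GWW}; all they obtained for~$\mathbf r$ is Theorem~\ref{thm:tPaperFolding} ($\deg_t H_k(\mathbf r,t)\le 3$), which says nothing about~(\ref{eq:period10}). The actual proofs on record proceed differently: Guo--Wu--Wen use direct determinant recurrences in the style of~\cite{APWW1998}, and~\cite{Han2014} automates that approach. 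If you want a genuine proof along your lines, you must actually produce the closed finite system of mod-$2$ recurrences and verify its period, not merely assert that one should exist.
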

An automatic proof of Theorem \ref{thm:GWW} by a computer algebra system is described in \cite{Han2014}.
Our second result is stated next.
\begin{thm}
\label{thm:tPaperFolding}
For every positive integer $k$
the $t$-Hankel determinant $H_k({\bf r},t)$ of the 
regular paperfolding sequence ${\bf r}$ 
is a polynomial in $t$ of degree less than or equal to $3$.
\end{thm}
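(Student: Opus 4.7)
The plan is to adapt the combinatorial framework developed in Section \ref{section:permutation} for the period-doubling sequence to the regular paperfolding sequence $\mathbf{r}$. Expanding the $t$-Hankel determinant directly yields
\[
H_k(\mathbf{r}, t) \;=\; \sum_{\sigma \in \Sym_k} \sgn(\sigma)\, t^{\fix(\sigma)} \prod_{i=1}^{k} r_{i+\sigma(i)-2},
\]
so the coefficient of $t^j$ is a signed count of those permutations $\sigma$ with $\fix(\sigma)=j$ whose ``graph'' $\{(i,\sigma(i))\}$ lies in the $1$-support of $\mathbf{r}$. Since the diagonal entries come from $r_0, r_2, r_4, \ldots = 1,0,1,0,\ldots$, every fixed point of a contributing $\sigma$ must sit at an odd position~$i$.

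First I would apply Theorem~\ref{thm:key} (or a paperfolding analogue of it) to rewrite this signed sum as an enumeration of a more restrictive combinatorial family, still refined by the fixed-point statistic tracked by~$t$. Next I would exploit the recursive structure of $\mathbf{r}$, in particular the identities $r_{2n+1} = r_n$ and the block pattern $r_{4n} = 1$, $r_{4n+2} = 0$, to characterise precisely which admissible configurations can carry four or more fixed points.

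The heart of the argument is to establish that the coefficient of $t^j$ vanishes for every $j \geq 4$. I would construct a sign-reversing, fixed-point-preserving involution on the set of admissible permutations with at least four fixed points: given such a $\sigma$, the paperfolding recursion should canonically single out a pair of odd-indexed fixed points (or a designated pair of cycles adjacent to them) whose local swap produces another admissible $\sigma'$ of opposite sign and the same fixed-point count. Pairing each such $\sigma$ with $\sigma'$ then forces all contributions in degrees $\geq 4$ to cancel, leaving $H_k(\mathbf{r},t)$ as a polynomial of degree at most~$3$.

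The main obstacle will be showing that the involution is well-defined on the entire stratum $\{\fix(\sigma) \geq 4\}$ and is genuinely fixed-point-free. The delicate point is to certify that the modified permutation still lies in the $1$-support of $\mathbf{r}$, which should be controlled by the binary-expansion description of the paperfolding sequence; this will likely demand a careful case analysis of the local configuration of $\sigma$ near the four selected fixed points. Once the cancellation is in place for every $j \geq 4$, the degree bound follows at once, with the small values $H_1,\ldots,H_4$ computed directly as a sanity check.
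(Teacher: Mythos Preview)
Your high-level strategy---expand via Leibniz and kill all contributions with $\fix(\sigma)\ge 4$ by a sign-reversing, fixed-point-preserving involution---is exactly what the paper does. But two points in your plan are off, and the decisive idea is missing.

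First, Theorem~\ref{thm:key} is irrelevant here: it is a parity (mod~$2$) statement about involutions, whereas for the degree bound you need \emph{exact} integer cancellation over all permutations, not a congruence. No ``paperfolding analogue'' of Theorem~\ref{thm:key} is needed or useful. Second, your proposed involution (``single out a pair of odd-indexed fixed points \ldots whose local swap \ldots'') cannot work as stated: swapping two fixed points does nothing, and any move that disturbs a fixed point will change $\fix(\sigma)$.

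What you are missing is the concrete mechanism. Since even elements of $R$ are exactly the multiples of $4$, a contributing column $\binom{i}{\sigma(i)}$ with $i,\sigma(i)$ both odd must satisfy $i+\sigma(i)\equiv 0\pmod 4$, i.e.\ $i$ and $\sigma(i)$ lie in \emph{different} residue classes among $\{1,3\}\bmod 4$. A short count shows that if $\fix(\sigma)\ge 4$ (all fixed points at even positions), then the number of odd--odd columns is at least $\lfloor k/2\rfloor-(\lceil k/2\rceil-4)\ge 3$. Take the first three such columns; by pigeonhole, two of their bottom entries $j_a,j_b$ are congruent mod~$4$. Swapping these two values yields a permutation $\tau$ which is still contributing (the new sums $i_a+j_b$ and $i_b+j_a$ are still $\equiv 0\pmod 4$), has $\sgn(\tau)=-\sgn(\sigma)$, and has the same fixed-point set (since $i_a\not\equiv j_b\pmod 4$ forces $i_a\ne j_b$, etc.). This is the entire argument---no recursion $r_{2n+1}=r_n$, no binary-expansion case analysis, and no involvement of the fixed points themselves.
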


Theorem \ref{thm:tPaperFolding} is proved in 
Section~\ref{section:tPaperFolding}. 
In the following table we reproduce the first values of the $t$-Hankel determinants of the regular paperfolding sequence ${\bf r}$. 
We see that all the $H_k({\bf r}, t)$'s are polynomials of degree less than or equal to $3$.

\bigskip
\centerline{
\begin{tabular}{|c|c||c|c|}
	  \hline
		$k$ & $H_k({\bf r},t)$   &  $k$ & $H_k({\bf r},t)$    \\
	 \hline
	 $0$ & $1$                 & $5$ & $-t^3 + 2t^2 + 2t - 2$      \\
	 $1$ & $t$                &  $6$ & $2t^2 - 2t - 4$             \\
	 $2$ & $-1$              &   $7$ & $3t^3 - 6t^2 - 7t + 6$      \\
	 $3$ & $-2t$            &    $8$ & $-9t^2 + 12t + 16$          \\
	 $4$ & $-t^2 + 2t + 1$ &     $9$ & $-15t^3 + 20t^2 + 46t - 40$   \\
	  \hline
\end{tabular}
}
\medskip

As earlier mentioned, Theorem \ref{thm:tAPWW} is a $t$-extension of Theorem \ref{thm:APWW}.
However, Theorem \ref{thm:GWW} can not be obtained from Theorem \ref{thm:tPaperFolding} by
specializing $t=1$.  The following problem remains unsolved.

\begin{problem}
Find a true $t$-extension of Theorem \ref{thm:GWW}. In other words,
find a property of the $t$-Hankel determinants of
the regular paperfolding sequence, which implies relation (\ref{eq:period10}) when $t=1$. 
\end{problem}

\medskip

\section{Permutations and involutions}\label{section:permutation} 
A combinatorial set-up, based on permutations and involutions, for studying 
the Hankel determinants of the period-doubling sequence was 
introduced in \cite{BH2013}. We propose a refinement of such a
combinatorial set-up for studying $t$-Hankel determinants.
The following infinite sets of integers play an important role.
\begin{align*}
N&=\NN^0=\{0,1,2,3,\ldots\},\\
J&=\{(2n+1)2^{2k}-1\mid n,k\in N\}=\{0,2,3,4,6,8,10,11,12,14,\cdots\},\\
J^{\ast}&=\{(2n+1)2^{2k}-1\mid n,k\in N,k>0\}=\{3,11,15,19,27,35,\cdots\},\\
K&=N\setminus J=\{(2n+1)2^{2k+1}-1\mid n,k\in N\}=\{1,5,7,9,13,17,\cdots\},\\
L&=N\setminus J^{\ast}=K\cup \{2n\mid n\in N\}=\{0,1,2,4,6,8,10,13,14,16,\cdots\},\\
P&=\{k\mid k\equiv 0,3 \pmod 4\}=\{0,3,4,7,8,11,12,15,16,\cdots\},\\
Q&=\{k\mid k\equiv 1,2 \pmod 4\}=\{1,2,5,6,9,10,13,14,17,\cdots\}.
\end{align*}
For each infinite set $A$ let $A|_{m}$ be the finite set composed of the 
smallest $m$ integers in $A$.

Let $\Sym_m=\Sym_{\{0,1,\cdots,m-1\}}$ be the set of all permutations 
on $N|_m$. A permutation is represented by the product of its disjoint cycles. 
For example, the permutation $\sigma =(0,5)(1)(2,6,3)(4,8)(7)$ is
an element from $\Sym_{9}$.
An $involution$ is a permutaion $\sigma$ such that $\sigma=\sigma^{-1}$. 
Equivalently, a permutation $\sigma$ is an involution if each cycle of $\sigma$
is either a fixed point $(b)$ or a {\it transposition} $(c,d)$. For instance,
$\sigma =(0,5)(1)(2,6)(3)(4,8)(7)\in \Sym_{9}$ is an involution.
For each set $B$, a transposition $(c,d)$ is said ``{\it in $B$}''  
if $c+d\in B$. In this case, we write $(c,d)\in B$.

For a nonnegative integer~$k$ and two sets of positive integers $A,B$ 
such that $A$ is finite,
let $\mu(A,k,B)$ be the number of involutions $\sigma$ in $\Sym_A$ 
having exactly $k$ transpositions
such that all transpositions of $\sigma$ are in $B$.
The following
key result is useful for proving Theorem \ref{thm:tAPWW} (see Section \ref{section:tAPWW}). 
\begin{thm}
\label{thm:key}
For $m\ge 1$ and $k\ge 0,$ we have 
\begin{equation}
\mu(N|_{m},k,J)\equiv\begin{cases}
	1 \pmod2, & \text{if $k=0$}; \\
	0 \pmod2, & \text{if $k\geq 1$}. \\
\end{cases}
\end{equation}
\end{thm}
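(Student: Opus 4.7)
The plan is to exhibit, for each $k \geq 1$, a fixed-point-free involution $\Phi$ on the set
$$\mathcal{S}_{m,k} := \{\sigma \in \Sym_{N|_m} : \sigma \text{ is an involution with exactly } k \text{ transpositions, all in } J\}.$$
Its existence immediately forces $\mu(N|_m, k, J) = |\mathcal{S}_{m,k}|$ to be even. The case $k = 0$ is trivial: the identity is the unique involution with no transpositions, so $\mu(N|_m, 0, J) = 1$, which is odd.

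The construction of $\Phi$ should exploit the arithmetic structure of $J$. A useful reformulation is $J = \{j \in N : v_2(j+1) \text{ is even}\}$, where $v_2$ denotes the $2$-adic valuation; in particular $J$ contains every even non-negative integer. Consequently, any transposition $(c,d)$ with $c,d$ of the same parity lies in $J$ automatically, and only \emph{mixed-parity} transpositions face the nontrivial condition that $v_2(c+d+1)$ be a positive even integer. Given $\sigma \in \mathcal{S}_{m,k}$, my plan is to identify a canonical element to toggle---for instance, the smallest index participating in a transposition whose sum has extremal $2$-adic valuation---and to define $\Phi(\sigma)$ by a local swap that preserves both the total number of transpositions and the ``all transpositions in $J$'' property. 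The recursive decomposition $J = \{2n : n \in N\} \cup (4J+3)$ suggests defining $\Phi$ hierarchically, descending one $2$-adic level at a time.

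The main obstacle will be finding a swap rule that is canonical and uniformly applicable: because the allowed transpositions form an intricately graded set, modifying one transposition may force a cascade elsewhere, and care is especially needed near the boundary $m-1$ and when the naive candidate for $\Phi(\sigma)$ would conflict with another transposition or land outside $N|_m$. An alternative approach is induction on $m$ via the recurrence
$$\mu(N|_m, k, J) = \mu(N|_{m-1}, k, J) + \sum_{\substack{0 \le j < m-1 \\ j + m - 1 \in J}} \mu(N|_{m-1} \setminus \{j\}, k-1, J),$$
obtained by splitting on whether $m-1$ is a fixed point of $\sigma$. However, since the sets $N|_{m-1} \setminus \{j\}$ are not themselves prefixes, this route requires strengthening the inductive statement to cover involutions on a broader class of subsets of $N$, which is where I expect the genuine combinatorial work to lie.
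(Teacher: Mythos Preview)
Your proposal is not a proof but a pair of strategies, neither of which you carry out. For the sign-reversing involution approach you write that ``the main obstacle will be finding a swap rule that is canonical and uniformly applicable,'' and you never supply one; for the recurrence approach you note that it ``requires strengthening the inductive statement to cover involutions on a broader class of subsets of $N$,'' and you do not formulate or prove such a strengthening. So nothing has been established for $k\ge 1$. Your observations about the structure of $J$ (the $2$-adic characterization, the fact that same-parity transpositions are automatically in $J$) are correct and relevant, but by themselves they do not yield a canonical toggle: a transposition $(c,d)$ with $c\equiv d\pmod 2$ imposes no constraint on the individual residues of $c$ and $d$, so there is no obvious local swap that stays inside $\mathcal{S}_{m,k}$, and you have not shown how to handle this.

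The paper does neither of the things you suggest. It does not build an explicit involution on $\mathcal{S}_{m,k}$, and it does not induct from $m$ to $m-1$. Instead it inducts by \emph{halving}: it splits $J$ as $J^{\ast}\cup\{\text{even integers}\}$, so that an involution in $\mathcal{S}_{m,k}$ has some $k_1$ transpositions in $J^{\ast}$ and $k_2=k-k_1$ transpositions of type $(\text{even},\text{even})$ or $(\text{odd},\text{odd})$. The latter are counted by products of binomial coefficients and double factorials (odd modulo $2$). The former are handled by noting that a transposition in $J^{\ast}$ has sum $\equiv 3\pmod 4$, so both endpoints lie in $P=\{0,3,4,7,\ldots\}$ or both in $Q=\{1,2,5,6,\ldots\}$; bijections $N|_m\to P|_m$ and $P|_m\to Q|_m$ then translate the problem to a count on $N|_n$ with $n=\lfloor m/2\rfloor$ or $\lceil m/2\rceil$. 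The binomial sums collapse modulo $2$ via two small lemmas, and one is left with $\mu(N|_n,\ell,J)$ for $\ell\ge 1$, to which the inductive hypothesis applies. If you want to pursue your first idea, you would have to produce a concrete involution and verify it; if you want to pursue your second, you would have to state precisely which subsets $A\subset N$ you allow and prove the stronger claim---neither is done here.
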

The proof of Theorem \ref{thm:key} is given  
in Section \ref{section:tAPWW}, with the help of several lemmas
stated in the remainder of this section.
\begin{lemma}
\label{lemma:NPQ}
For $m\ge 1$ and $k\geq 0$ we have 
\begin{equation}
\label{equ:np}
\mu(N|_{m},k,J)=\mu(P|_{m},k,L)
\end{equation}
and 
\begin{equation}
\label{equ:pq}
\mu(P|_{m},k,J^{\ast})=\mu(Q|_{m},k,J^{\ast}).
\end{equation}
\end{lemma}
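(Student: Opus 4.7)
The plan is to prove each of the two equalities by producing an explicit order-preserving bijection $\rho$ between the two underlying index sets that transports the ``transposition-in-$B$'' condition correctly. Conjugation $\sigma \mapsto \rho \circ \sigma \circ \rho^{-1}$ will then give a transposition-count-preserving bijection between the two sets of involutions being counted. Throughout, I write $v_2(n)$ for the $2$-adic valuation of a positive integer $n$, and use the reformulations: $a \in J$ iff $v_2(a+1)$ is even; $a \in J^*$ iff $v_2(a+1)$ is even and at least $2$; and $a \in L = N \setminus J^*$ iff $v_2(a+1)$ is either $0$ or odd.

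For (\ref{equ:np}), I would define $\phi \colon N \to P$ by $\phi(n) = 2n$ when $n$ is even and $\phi(n) = 2n+1$ when $n$ is odd. This map is strictly increasing with image exactly $P$, so it restricts to a bijection $N|_m \to P|_m$. The task reduces to showing $a + b \in J \Leftrightarrow \phi(a) + \phi(b) \in L$ for distinct $a, b \in N$, which I verify by case analysis on parities. When $a$ and $b$ share the same parity, $\phi(a) + \phi(b)$ is twice an integer, so $v_2(\phi(a) + \phi(b) + 1) = 0$, and likewise $v_2(a + b + 1) = 0$; both sides hold. When $a, b$ have opposite parities, $\phi(a) + \phi(b) = 2(a + b) + 1$ and $v_2(\phi(a) + \phi(b) + 1) = 1 + v_2(a + b + 1) \geq 2$; this is odd iff $v_2(a + b + 1)$ is even, which matches $a + b \in J$ exactly.

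For (\ref{equ:pq}), I would define $\psi \colon P \to Q$ by $\psi(p) = p + 1$ when $p \equiv 0 \pmod 4$ and $\psi(p) = p - 1$ when $p \equiv 3 \pmod 4$. This swaps each pair $\{4j, 4j+3\} \subset P$ with $\{4j+1, 4j+2\} \subset Q$, is strictly increasing, and restricts to a bijection $P|_m \to Q|_m$. A case analysis on residues modulo $4$ will establish that $a + b \in J^* \Leftrightarrow \psi(a) + \psi(b) \in J^*$ for distinct $a, b \in P$: whenever $\{a \bmod 4,\, b \bmod 4\} = \{0, 3\}$ one has $\psi(a) + \psi(b) = a + b$ exactly, so the equivalence is immediate; whenever both $a, b \equiv 0$ or both $a, b \equiv 3 \pmod 4$, then $\psi(a) + \psi(b) = a + b \pm 2$, and a direct check shows that both $a+b$ and $\psi(a)+\psi(b)$ have odd successors, so $v_2(\cdot + 1) = 0$ in all four cases and neither sum lies in $J^*$.

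I expect essentially no obstacle beyond this case analysis. The only real insight is the choice of $\phi$ and $\psi$ themselves: both are forced by the requirement that consecutive blocks in the source index set map into consecutive blocks in the target set in a way that interacts cleanly with the $2$-adic valuation condition. Once the bijections are written down, the verifications are short arithmetic exercises.
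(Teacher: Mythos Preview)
Your proposal is correct and follows essentially the same route as the paper: your maps $\phi$ and $\psi$ are exactly the bijections $\beta$ and $\delta$ defined there, and the case analyses match (yours is phrased via $v_2$ rather than explicit factorizations, and your treatment of (\ref{equ:pq}) is in fact slightly more thorough in that you explicitly verify the same-residue case where neither sum lies in $J^{\ast}$). There is nothing to add.
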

\begin{proof}
We define two transformations:
\begin{align*}
\beta:N\rightarrow P;& \quad \quad \ell\mapsto \begin{cases}
2\ell, & \quad\text{if $\ell$ is even}; \\
2\ell+1, & \quad\text{if is $\ell$ is odd};
\end{cases}\\
\delta:P\rightarrow Q;& \quad \quad \ell\mapsto \begin{cases}
\ell+1, & \quad\text{if $\ell$ is even}; \\
\ell-1, & \quad\text{if $\ell$ is odd}.
\end{cases}
\end{align*}
The transformation $\beta$ is a  bijection of $N|_{m}$ onto  $P|_{m}$,
and can be extended to the set of all involutions on $N|_m$ 
by applying $\beta$ on every letter of the involutions.
For example
$$
\beta((7)(0,5),(6,3),(1),(8,2),(4))=(15)(0,11)(12,7)(3)(16,4)(8).
$$
We now claim that, for any $c,d\in N|_{m}$, the transposition 
$(c,d)$ is in $J$ if and only if  $(\beta(c),\beta(d))$ is in $L$.
The proof of this claim works by distinguishing the parities of $c$ and $d$:
(i) if $c$ and $d$ are even, then $\beta(c)=2c$ and $\beta(d)=2d$, 
so that $\beta(c)+\beta(d)$ is even and is in $L$;
(ii) if $c$ and $d$ are odd, then $\beta(c)=2c+1$ and $\beta(d)=2d+1$, 
so that $\beta(c)+\beta(d)$ is even  and is in $L$;
(iii) if $c+d\in J$ and one of the intergers $c,d$ is even, the other being odd.
Then,
$$
\beta(c)+\beta(d)=2c+2d+1=2\times((2n+1)2^{2k}-1)+1=(2n+1)2^{2k+1}-1 \in L.
$$
The ``reverse part'' is proved in the same manner.
Thus, equation (\ref{equ:np}) holds.

The transformation $\delta$ is a  bijection of $P|_{m}$ onto  $Q|_{m}$,
and can be extended to the set of all involutions on $P|_m$ 
by applying $\delta$ on every letter of the involutions.
For example
$$
\delta((15)(0,11)(12,7)(4)(16,3)(8))=(14)(1,10)(13,6)(5)(17,2)(9).
$$
If the transposition $(c,d)$ is in $J^{\ast}$ and $c,d\in P$, 
then one of the integers $c,d$ is even, the other being odd.
Hence,
$$
\delta(c)+\delta(d)=c-1+d+1=c+d\in J^{\ast}.
$$
Thus, equation (\ref{equ:pq}) is proved.
\end{proof}
\begin{lemma}
\label{lemma:2n_n}
For each $k\geq 0$ we have
\begin{equation}
\mu(N|_{2n},k,J^{\ast})\equiv\begin{cases}
	0 \pmod2, & \text{if $k$ is odd} ;\\
	\mu(P|_{n},k/2,J^{\ast}) \pmod2,& \text{if $k$ is even}.
\end{cases}
\end{equation}
\end{lemma}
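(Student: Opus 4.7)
The plan is to reformulate $\mu(N|_{2n},k,J^{\ast})$ as a bipartite matching count that factors as a product, and then use the mod-$2$ squaring identity $\left(\sum_k a_k t^k\right)^2\equiv \sum_k a_k t^{2k}\pmod 2$ to read off both cases at once.

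First I will observe that every element of $J^{\ast}$ is odd, since $(2m+1)2^{2\ell}$ is even whenever $\ell\geq 1$. Hence any transposition $(c,d)$ with $c+d\in J^{\ast}$ must pair an even $c$ with an odd $d$. Writing $c=2a$, $d=2b+1$ with $a,b\in N|_n$, the identity $v_2(c+d+1)=1+v_2(a+b+1)$ shows that $c+d\in J^{\ast}$ iff $v_2(a+b+1)$ is odd, iff $a+b\in K$. Thus $\mu(N|_{2n},k,J^{\ast})$ equals the number of size-$k$ matchings in the bipartite graph on $N|_n\times N|_n$ whose edges are the pairs $(a,b)$ with $a+b\in K$. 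Because $K$ consists of odd numbers, such matchings split into two independent sub-matchings according to the parity of $a$; substituting $a=2a'+\epsilon_1$, $b=2b'+\epsilon_2$ with $\epsilon_1+\epsilon_2=1$ and repeating the same $v_2$ computation, each sub-matching lives on a bipartite graph between $N|_{n_e}$ and $N|_{n_o}$, where $n_e=\lceil n/2\rceil$ and $n_o=\lfloor n/2\rfloor$, with edge $(a',b')$ iff $a'+b'\in J$. Writing $M(k)$ for the number of such size-$k$ matchings (symmetric in its two sides), the decomposition yields
\[
\mu(N|_{2n},k,J^{\ast})=\sum_{k_1+k_2=k}M(k_1)\,M(k_2).
\]

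Consequently $F(t):=\sum_k \mu(N|_{2n},k,J^{\ast})t^k=f(t)^2$ where $f(t)=\sum_k M(k)t^k$. Over $\mathbb{F}_2[t]$ this equals $f(t^2)$, giving $\mu(N|_{2n},k,J^{\ast})\equiv 0\pmod 2$ when $k$ is odd and $\mu(N|_{2n},k,J^{\ast})\equiv M(k/2)\pmod 2$ when $k$ is even. It then remains to identify $M(k/2)$ with $\mu(P|_n,k/2,J^{\ast})$. To this end I will apply the same parity reduction to $P|_n$: the set $P|_n$ decomposes as $\{4u : 0\le u<n_e\}\cup \{4v+3 : 0\le v<n_o\}$, and any $J^{\ast}$-transposition on $P|_n$ must pair an element of the first block with one of the second. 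Substituting $c=4u$, $d=4v+3$ gives $v_2(c+d+1)=2+v_2(u+v+1)$, so the condition $c+d\in J^{\ast}$ reduces to $u+v\in J$, showing $\mu(P|_n,k/2,J^{\ast})=M(k/2)$.

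The argument is essentially careful bookkeeping of parity decompositions. The main obstacle, as I see it, is notational rather than conceptual: one must match up the dimensions $n_e,n_o$ correctly on both sides, and observe that the sub-matching count is symmetric in its two sides so that the sum $\sum_{k_1+k_2=k}M(k_1)M(k_2)$ really does come out as $f(t)^2$. Once this is set up, the Frobenius-type squaring identity in characteristic $2$ delivers both halves of the lemma in a single line.
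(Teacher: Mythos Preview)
Your proof is correct, and in fact it lands on exactly the same identity the paper uses, namely
\[
\mu(N|_{2n},k,J^{\ast})=\sum_{i+j=k}\mu(P|_{n},i,J^{\ast})\,\mu(P|_{n},j,J^{\ast}),
\]
since your intermediate count $M(\cdot)$ is precisely $\mu(P|_n,\cdot,J^{\ast})$. From there your Frobenius squaring $f(t)^2\equiv f(t^2)$ over $\mathbb{F}_2[t]$ is just a generating-function repackaging of the paper's case split on the parity of $k$.

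The only genuine difference is in how you reach this factorisation. The paper splits $N|_{2n}$ directly as $P|_n\sqcup Q|_n$, observes that any $J^{\ast}$-transposition has both endpoints in $P$ or both in $Q$ (because elements of $J^{\ast}$ are $\equiv 3\pmod 4$), and then invokes Lemma~\ref{lemma:NPQ} to replace $\mu(Q|_n,\cdot,J^{\ast})$ by $\mu(P|_n,\cdot,J^{\ast})$. You instead split twice by parity (even/odd on $N|_{2n}$, then even/odd on $N|_n$), tracking the $2$-adic valuation at each step, which makes the two symmetric factors visible without appealing to Lemma~\ref{lemma:NPQ}. Your route is self-contained but longer; the paper's is shorter because the $P/Q$ bijection has already been established. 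Either way the $v_2$ bookkeeping you carry out is accurate, and the symmetry argument ensuring the two components give the same $M(\cdot)$ is sound.
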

\begin{proof}
It is easy to see that, if $c+d\in J^{\ast}$, then $c+d\equiv 3$ (mod 4). 
Thus, both $c$ and $d$ belong either to $P$ or to $Q$.
Hence,
\begin{eqnarray}
\mu(N|_{2n},k,J^{\ast})
&=&\sum_{i+j=k}\mu(P|_{n},i,J^{\ast})\ \mu(Q|_{n},j,J^{\ast}) \\
&=&\sum_{i+j=k}\mu(P|_{n},i,J^{\ast})\ \mu(P|_{n},j,J^{\ast}). \label{eq:muPP}
\end{eqnarray}
The last identity holds by Lemma (\ref{lemma:NPQ}).
When $k=2\ell+1$ is odd, the right-hand side of equation (\ref{eq:muPP})
is equal to
\begin{equation*}
2\sum_{i=0}^{\ell}\mu(P|_{n},i,J^{\ast})\ \mu(P|_{n},2\ell+1-i,J^{\ast})\equiv 0 \pmod2.
\end{equation*}
When $k=2\ell$ is even, we have 
\begin{eqnarray*}
	&&\sum_{i+j=2\ell}\mu(P|_{n},i,J^{\ast})\ \mu(P|_{n},j,J^{\ast})\\
 &=& 2\sum_{i=0}^{\ell-1}\mu(P|_{n},i,J^{\ast})\ \mu(P|_{n},2\ell-i,J^{\ast})
	+\mu(P|_{n},\ell,J^{\ast})\ \mu(P|_{n},\ell,J^{\ast})\\
 &\equiv& \mu(P|_{n},k/2,J^{\ast}) \pmod2.
\end{eqnarray*}
This achieves the proof.
\end{proof}

In the sequel, the notation $a\equiv b$ means that the integers $a$ and $b$ are congruent modulo 2 when nothing else is specified. 
\begin{lemma}\label{lemma:sumP}
For $m\ge 1$ and $k\ge 1$ we have
\begin{equation}
\label{equ:sumP}
\sum_{i=0}^{k}\mu(P|_{m},i,J^{\ast})\binom{m-2i }{ 2k-2i}
\equiv \mu(P|_{m},k,L) \pmod2.
\end{equation}
\end{lemma}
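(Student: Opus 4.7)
The plan is to give a combinatorial mod~$2$ proof that interprets both sides as counting involutions on $P|_m$. The key numerical fact I will use repeatedly is that the double factorial $(2j-1)!!$, which equals the number of perfect matchings on a $2j$-element set, is odd for every $j\ge 0$. Hence modulo~$2$ the binomial $\binom{m-2i}{2k-2i}$ can be replaced by $\binom{m-2i}{2k-2i}(2(k-i)-1)!!$, i.e.\ by the number of involutions on the $m-2i$ remaining points having exactly $k-i$ transpositions (with no further constraint on which pairs are used).

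The first ingredient is that by definition $L=N\setminus J^{\ast}$, so $J^{\ast}$ and $L$ partition $N$ and every transposition lies in exactly one of them. The next step is to reinterpret the $i$-th summand on the left-hand side of~(\ref{equ:sumP}), modulo~$2$, as the number of ordered pairs $(\tau,\pi)$ where $\tau$ is an involution on $P|_m$ with exactly $i$ transpositions all in $J^{\ast}$, and $\pi$ is an (otherwise arbitrary) involution on the set of $m-2i$ fixed points of $\tau$ having exactly $k-i$ transpositions. Gluing produces an involution $\sigma=\tau\cup\pi$ on $P|_m$ with $k$ transpositions, marked with a distinguished sub-involution $\tau\subseteq\sigma$ consisting of $i$ transpositions of $\sigma$ that happen to lie in $J^{\ast}$.

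The final step is to reverse the order of summation. For a fixed involution $\sigma$ on $P|_m$ with $k$ transpositions, let $j(\sigma)$ denote the number of transpositions of $\sigma$ lying in $J^{\ast}$. Once $\sigma$ is chosen, a pair $(\tau,\pi)$ giving rise to it is determined by the choice of which $i$ of the $j(\sigma)$ many $J^{\ast}$-transpositions are assigned to $\tau$, so the total contribution of $\sigma$ is
$$\sum_{i=0}^{j(\sigma)}\binom{j(\sigma)}{i}=2^{j(\sigma)}.$$
Modulo~$2$ this is nonzero if and only if $j(\sigma)=0$, i.e.\ every transposition of $\sigma$ lies in $L$. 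The surviving involutions are exactly those counted by $\mu(P|_m,k,L)$, which proves~(\ref{equ:sumP}).

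The only real obstacle is the conceptual one: seeing that the factor $\binom{m-2i}{2k-2i}$ should be read, modulo~$2$, as an involution count rather than as a subset count. Once this reinterpretation is in place the identity reduces to the elementary cancellation $2^{j(\sigma)}\equiv [j(\sigma)=0]\pmod 2$ combined with the partition $N=J^{\ast}\sqcup L$, and no further arithmetic analysis of $J^{\ast}$ or $L$ is needed.
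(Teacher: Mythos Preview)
Your proof is correct and follows essentially the same route as the paper's: both replace $\binom{m-2i}{2k-2i}$ by $\binom{m-2i}{2k-2i}(2(k-i)-1)!!$ modulo~$2$, interpret the resulting sum as a count of marked involutions with $k$ transpositions on $P|_m$ in which a subset of the $J^{\ast}$-transpositions is distinguished, and then use $\sum_i\binom{j}{i}=2^{j}$ together with $L=N\setminus J^{\ast}$ to kill all contributions except those with every transposition in $L$. The only cosmetic difference is that the paper first stratifies by the total number $j$ of $J^{\ast}$-transpositions (via the sets $\Inv_{i,j}$) before summing, whereas you sum directly over involutions $\sigma$ and group by $j(\sigma)$; the underlying bijection and cancellation are identical.
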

\begin{proof}
Recall that $\mu(A,k,B)$ is the number of involutions $\sigma$ in $\Sym_A$ 
having exactly $k$ transpositions 
such 
that all transposions of $\sigma$  are in $B$.
For two disjoint sets of integers $B_1$ and $B_2$,
we define $\mu(A,k_1,k_2,B_1, B_2)$ to be the number of involutions $\sigma$
in $\Sym_A$ 
having exactly $k_1$ transpositions in $B_1$ and $k_2$ transpositions in $B_2$
such that all transposions are in $B_1\cup B_2$. 
So that $\mu(A,0,k_2,B_1, B_2)=\mu(A,k_2,B_2)$.
\medskip

Let $i$ and $j$ be two positive integers such that $0\leq i\leq j\leq k$. 
Consider the set $\Inv_{j}$ of involutions $\sigma$ on $P|_m$ having 
have exactly $j$ transpositions in $J^\ast$ and $k-j$ transpositions in $L$
and no other transposition.
Then, the cardinality of $\Inv_j$ is equal to
$\mu(P|_{m},j,k-j,J^{\ast},L)$. 
A marked involution is obtained
from an involution $\sigma\in\Inv_j$ by coloring $i$ transpositions 
among the $j$ transpositoins in $J^\ast$. Let $\Inv_{i,j}$ be
the set of all those marked involutions. 
The cardinality of $\Inv_{i,j}$ is equal to
$\binom{j}{ i}\ \mu(P|_{m},j,k-j,J^{\ast},L)$. 
Hence, the cardinality of the set 
$\Inv_{i,\bullet}= \Inv_{i,i}+\Inv_{i,i+1}+\cdots+\Inv_{i,k}$ is equal to
\begin{equation}\label{equ:card1}
	\sum_{j=i}^k \binom{j}{ i}\ \mu(P|_{m},j,k-j,J^{\ast},L).
\end{equation}
On the other hand, the marked involutions in $\Inv_{i, \bullet}$ can be
enumerated as follows. 
Consider the involutions on $P|_m$ that have exactly $i$ transpositions in
$J^{\ast}$, which are said to be colored.  
There are $\mu(P|_{m},i,J^{\ast})$ such involutions. 
Then ramdomly choose $2k-2i$ letters from the rest $m-2i$ original 
fixed points on $P|_m$, to generate another $k-i$ transpositions, which are either in $J^{\ast}$ or in $L$. We get a marked involution which has 
exactly $i+(k-i)=k$ transpositions. 
Hence, the cardinality of the set 
$\Inv_{i,\bullet}$ is equal to
\begin{equation}
\label{equ:ast}
\mu(P|_{m},i,J^{\ast})\binom{m-2i }{ 2k-2i}(2k-2i-1)(2k-2i-3)\cdots 3\cdot 1.
\end{equation}
Hence, the two quantities  (\ref{equ:card1}) and (\ref{equ:ast})
are equal. We have successively
\begin{align*}
	&\sum_{i=0}^{k}\mu(P|_{m},i,J^{\ast})\binom{m-2i }{ 2k-2i}\\
	\equiv&\sum_{i=0}^{k}\mu(P|_{m},i,J^{\ast})[\binom{m-2i }{ 2k-2i}(2k-2i-1)(2k-2i-3)\cdots(3)(1)]\\
	=&\sum_{i=0}^{k}\sum_{j=i}^{k}\binom{j }{ i}\mu(P|_{m},j,k-j,J^{\ast},L)\\
	=&\sum_{j=0}^{k}(\sum_{i=0}^{j}\binom{j }{ i})\mu(P|_{m},j,k-j,J^{\ast},L)\\
=&\sum_{j=0}^{k}2^{j}\mu(P|_{m},j,k-j,J^{\ast},L)\\
\equiv& \mu(P|_{m},0,k,J^{\ast},L)\\
=&\mu(P|_{m},k,L).
\end{align*}
This achieves the proof.
\end{proof}

\section{Proofs of Theorems \ref{thm:key} and \ref{thm:tAPWW}}\label{section:tAPWW} 
Firstly, we establish two lemmas about congruences for binomial coefficients.
\begin{lemma}
\label{lemma:bino1}
For $n, k \geq 0$ we have
\begin{equation}
	\sum_{i+j=k}\binom{n }{ 2i}\binom{n }{ 2j}\equiv\begin{cases}
	0 \pmod2, &   \text{if $k$ is odd}; \\
		\binom{n}{ k} \pmod2, & \text{if $k$ is even}.
\end{cases}
\end{equation}
\end{lemma}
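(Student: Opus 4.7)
The plan is to prove this by a single Frobenius-style generating function identity over $\mathbb{F}_2$, which makes the case distinction on the parity of $k$ transparent.

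First, I would observe that the left-hand side is, by definition of polynomial multiplication, the coefficient of $x^k$ in the square of the generating function
\begin{equation*}
E(x) := \sum_{i\ge 0}\binom{n}{2i}x^{i}.
\end{equation*}
Thus the identity to be proved reduces to understanding $E(x)^{2}$ modulo $2$.

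Next, I would invoke the Frobenius endomorphism on $\mathbb{F}_{2}[x]$: for any polynomial (or formal power series) $f(x)=\sum a_{i}x^{i}$ with coefficients in $\mathbb{F}_{2}$, one has $f(x)^{2}=\sum a_{i}^{2}x^{2i}=\sum a_{i}x^{2i}=f(x^{2})$, since $a_{i}^{2}=a_{i}$ in $\mathbb{F}_{2}$. Applying this to $E(x)$ gives, modulo $2$,
\begin{equation*}
E(x)^{2}\equiv E(x^{2})=\sum_{i\ge 0}\binom{n}{2i}x^{2i}\pmod 2.
\end{equation*}
Finally I would read off the coefficient of $x^{k}$ from both sides. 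On the left we recover the sum in the statement; on the right only even powers of $x$ appear, so the coefficient is $0$ when $k$ is odd, and $\binom{n}{k}$ when $k=2i$ is even. This is exactly the claimed congruence.

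I do not anticipate a real obstacle here: the only subtle point is the passage from an identity in characteristic zero to one in $\mathbb{F}_{2}$, but since the statement is itself a congruence mod $2$ and since $E(x)$ already has integer coefficients, reducing mod $2$ and then invoking the Frobenius identity is immediate and requires no sign analysis of the sort one would need if one tried to derive the lemma from $(1+x)^{n}\pm(1-x)^{n}$ in characteristic zero.
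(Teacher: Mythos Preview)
Your proof is correct and is, in substance, the same argument as the paper's: the paper pairs the off-diagonal terms $(i,j)$ with $(j,i)$ to kill them mod $2$ and then uses $a^{2}\equiv a\pmod 2$ on the surviving diagonal term, which is exactly the content of the Frobenius identity $E(x)^{2}\equiv E(x^{2})$ over $\mathbb{F}_{2}$ that you invoke. Your generating-function packaging is a bit slicker, but the underlying mechanism is identical.
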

\begin{proof}
If $k=2\ell+1$ is odd, then 
\begin{equation*}
	\sum_{i+j=2\ell+1}\binom{n }{ 2i}\binom{n }{ 2j}
	=2\sum_{i=0}^{\ell}\binom{n }{ 2i}\binom{n }{ 4\ell+2-2i}
\equiv 0\quad\text{(mod 2)}.
\end{equation*}
If $k=2\ell$ is even,  then
\begin{eqnarray*}
	\sum_{i+j=2\ell}\binom{n }{ 2i}\binom{n }{ 2j}
	&=&2\sum_{i=0}^{\ell-1}\binom{n }{ 2i}\binom{n }{ 4\ell-2i}
	+\binom{n }{ 2\ell}\binom{n }{ 2\ell}\\
	&\equiv& \binom{n}{ k} \pmod2.
\end{eqnarray*}
This achieves the proof.
\end{proof}
\begin{lemma}
\label{lemma:bino2}
For $n,m,k\geq 0$ such that $n+m$ is odd we have
\begin{equation}\label{equ:bino2}
	\sum_{i+j=k}\binom{n }{ 2i}\binom{m }{ 2j}\equiv \binom{n+m }{ 2k} \pmod2.
\end{equation}
\end{lemma}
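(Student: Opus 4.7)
The plan is to split the Vandermonde convolution according to the parities of the two summation indices. Starting from
$$\binom{n+m}{2k}=\sum_{a+b=2k}\binom{n}{a}\binom{m}{b},$$
and noting that $a$ and $b$ must share a common parity (since their sum $2k$ is even, the mixed-parity cross terms drop out automatically), I would rewrite this as
$$\binom{n+m}{2k}=\sum_{i+j=k}\binom{n}{2i}\binom{m}{2j}\;+\;\sum_{i+j=k-1}\binom{n}{2i+1}\binom{m}{2j+1}.$$
Thus (\ref{equ:bino2}) is equivalent to proving that the second ``both-odd'' sum on the right is $\equiv 0\pmod 2$ under the hypothesis that $n+m$ is odd.

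Since $n+m$ is odd, exactly one of $n$ and $m$ is even, and the ``both-odd'' sum is symmetric in $(n,m)$ (via the substitution $i\leftrightarrow j$), so by symmetry I may assume $m$ is even. I then claim that $\binom{m}{2j+1}$ is even for every $j\ge 0$ whenever $m$ is even; granting this claim, every summand in the ``both-odd'' sum carries the factor $\binom{m}{2j+1}$ and is therefore even, so the entire sum vanishes modulo $2$ and (\ref{equ:bino2}) follows.

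To establish the claim, I would invoke the absorption identity $(2j+1)\binom{m}{2j+1}=m\binom{m-1}{2j}$. Its right-hand side is even because $m$ is even, while the factor $2j+1$ on the left is odd, so $\binom{m}{2j+1}$ must itself be even. There is no serious obstacle here; the only thing requiring care is the parity bookkeeping when one splits the Vandermonde sum, and recognizing that the ``both-odd'' sum is symmetric in $n$ and $m$ so that the hypothesis ``$n+m$ odd'' can be replaced by the stronger assumption ``$m$ even'' at no cost.
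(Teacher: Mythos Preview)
Your proof is correct and follows essentially the same route as the paper: split the Vandermonde identity $\binom{n+m}{2k}=\sum_{a+b=2k}\binom{n}{a}\binom{m}{b}$ into its even--even and odd--odd parts, then kill the odd--odd sum modulo~$2$ using the fact that $\binom{\text{even}}{\text{odd}}$ is even. The only cosmetic difference is that the paper cites Lucas's theorem for this last fact, whereas you derive it self-containedly from the absorption identity $(2j+1)\binom{m}{2j+1}=m\binom{m-1}{2j}$.
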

\begin{proof} 
We have
\begin{eqnarray*}
	\binom{n+m}{ 2k}
	&=&\sum_{i+j=2k} \binom{n}{ i}\binom{m}{ j} \\
 &=&\sum_{i+j=k} \binom{n}{ 2i}\binom{m}{ 2j}
	+\sum_{i+j=k-1} \binom{n}{ 2i+1}\binom{m}{ 2j+1}.
\end{eqnarray*}
Since $\binom{2a }{ 2b+1}$  is even for any positive integers $a$ and $b$ \cite{Lucas}, 
\begin{equation}
	\binom{n}{ 2i+1}\binom{m}{ 2j+1}\equiv 0 \pmod2,
\end{equation}
if $n$ or $m$ is even.  This is true because $n+m$ is odd.
Equation (\ref{equ:bino2}) holds.
\end{proof}
Secondly, we prove Theorem \ref{thm:key} by induction.
\begin{proof}[Proof of Theorem \ref{thm:key}]
When $k=0$, the quantity $\mu(N|_{m},k,J)$ counts the involutions $\sigma$ 
without any
transposition. It means that every letter of $\sigma$ is a fixed point, 
so that $\mu(N|_{m},0,J)=1$.

\smallskip
When $k\geq 1$, two cases are to be considered.
Notice that 
any transposition of type $(even,even)$ or $(odd,odd)$ is in $J$ since
$J$ contains all even integers.
Let $k_1+k_2=k$. An involution $\sigma$ having exactly $k$ transpositions 
in $J$ can be generated from an involution $\tau$ having exactly $k_1$
transpositions in $J^\ast$ by adding $k_2$ transpositoins 
in $J\setminus J^\ast$. The latter $k_2$ transpositions are of type
$(even,even)$ or $(odd,odd)$, and are easy to count by using binomial coefficients.

\smallskip
\goodbreak
(i) When $m=2n$ is even and $k\geq 1$ we have
\begin{eqnarray*}
&& \mu(N|_{2n},k,J)\\
&=& \sum_{k_1+k_2=k}\mu(N|_{2n},k_1,J^{\ast})
	\sum_{i+j=k_2}\left[\binom{n-k_1 }{ 2i}(2i-1)(2i-3)\cdots 1 \right.\\
	&&\kern 2cm	\left.\times \binom{n-k_1 }{ 2j}(2j-1)(2j-3)\cdots 1\right]\\
 &\equiv &\sum_{k_1+k_2=k}\mu(N|_{2n},k_1,J^{\ast})\sum_{i+j=k_2}\binom{n-k_1 }{ 2i}\binom{n-k_1}{ 2j} \pmod2.
\end{eqnarray*}
If $k$ is odd, then one of the $k_1,k_2$ is odd and the other is even. 
By Lemma \ref{lemma:bino1} and 
Lemma \ref{lemma:2n_n}, $\mu(N|_{2n},k,J)\equiv 0\pmod2$.
If $k=2\ell$ is even, then
\begin{eqnarray*}
&&\mu(N|_{2n},k,J)\\
&=&\sum_{k_1+k_2=2l}\mu(N|_{2n},k_1,J^{\ast})\sum_{i+j=k_2}\binom{n-k_1 }{ 2i}
	\binom{n-k_1 }{ 2j}\\
	&\equiv &\sum_{k_1+k_2=\ell}\mu(N|_{2n},2k_1,J^{\ast})\!\!\sum_{i+j=2k_2}\!\binom{n-2k_1 }{2i}\binom{n-2k_1 }{ 2j}  \text{[By Lemma \ref{lemma:bino1}]}\\
 &\equiv &\sum_{k_1+k_2=\ell}\mu(P|_{n},k_1,J^{\ast})\binom{n-2k_1 }{ 2k_2}
	\text{\qquad\qquad\quad [By Lemmas \ref{lemma:2n_n} and \ref{lemma:bino1}]}\\
	&\equiv &\mu(P|_{n},\ell,L) \text{\qquad\qquad [By Lemma \ref{lemma:sumP}]}\\
 &=&\mu(N|_{n},k/2,J) \text{\ \ \qquad [By Lemma \ref{lemma:NPQ}]}\\
		&\equiv& 0\pmod2. \text{\ \quad\qquad [By induction]}
\end{eqnarray*}

\smallskip
(ii) When $m=2n+1$ is odd and $k\geq 1$, we successivly have
\begin{eqnarray*}
&& \mu(N|_{2n+1},k,J)\\
&=&\!\sum_{k_1+k_2=k}\!\mu(N|_{2n+1},k_1,J^{\ast})
	\!\!\sum_{i+j=k_2}\!\!\left[\binom{n+1-k_1 }{ 2i}(2i-1)(2i-3)\cdots 1\right.\\
  	&&\qquad\qquad\left.\times \binom{n-k_1 }{ 2j}(2j-1)(2j-3)\cdots 1\right]\\
&\equiv &\sum_{k_1+k_2=k}\mu(N|_{2n+1},k_1,J^{\ast})\sum_{i+j=k_2}\binom{n+1-k_1 }{ 2i}\binom{n-k_1}{ 2j}\\
	&\equiv &\sum_{k_1+k_2=k}
		\left[\sum_{r+s=k_1}\mu(P|_{n+1},r,J^{\ast})\mu(Q|_{n},s,J^{\ast})\right]
		\binom{2n+1-2k_1 }{ 2k_2}, \\
\end{eqnarray*}
where the last identity is obtained by using Lemma \ref{lemma:bino2}. 
Applying Lemmas \ref{lemma:NPQ} and \ref{lemma:bino2} to the above quantity we get
\goodbreak
\begin{eqnarray*}
&& \mu(N|_{2n+1},k,J)\\
	&\equiv &\sum_{k_1+k_2=k}\left[\sum_{r+s=k_1}\mu(P|_{n+1},r,J^{\ast})\mu(P|_{n},s,J^{\ast})\right]\\
	&& \qquad\qquad \times 
		\sum_{i+j=k_2}\binom{n+1-2r}{ 2i}\binom{n-2s }{ 2j} \\
	&=&\sum_{r+s+i+j=k}\mu(P|_{n+1},r,J^{\ast})\binom{n+1-2r}{ 2i}\mu(P|_{n},s,J^{\ast})\binom{n-2s }{ 2j}\\
	&=&\sum_{k_1+k_2=k}\left[\sum_{r+i=k_1}\mu(P|_{n+1},r,J^{\ast})\binom{n+1-2r}{ 2i}\right.\\
	&& \qquad \qquad \times \left.\sum_{s+j=k_2}\mu(P|_{n},s,J^{\ast})\binom{n-2s }{ 2j}\right]\\
 &\equiv&\sum_{k_1+k_2=k} \mu(P|_{n+1},k_1,L)\mu(P|_{n},k_2,L)  \text{\qquad\quad [By Lemma \ref{lemma:sumP}]}\\ 
 &\equiv&\sum_{k_1+k_2=k} \mu(N|_{n+1},k_1,J)\mu(N|_{n},k_2,J). \text{\qquad\quad[By Lemma \ref{lemma:NPQ}]} \\
		&\equiv& 0 \pmod 2. \text{\qquad\quad[By induction]}
\end{eqnarray*}
This achieves the proof.
\end{proof}
Lastly, Theorem \ref{thm:tAPWW} on the $t$-extensions of the Hankel determinants
of the period-doubling sequence is proved as follows.
Keep in mind the infinite set
$$
J=\{(2n+1)2^{2k}-1|n,k\in N\}=\{0,2,3,4,6,8,10,11,12,14,\cdots\},
$$
and the period-doubling sequence defined by (\ref{def:perioddoubling}).
In \cite{BH2013} Bugeaud and Han proved the following result.
\begin{lemma}
\label{lemma:J}
For $k\geq 0$, 
the integer $d_k$ is odd if, and only if, $k$ is in $J$.
\end{lemma}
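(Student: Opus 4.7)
The plan is to reduce the statement to a computation of $2$-adic valuations. First I would note that since every $e_k \in \{+1,-1\}$, the definition $d_k = \tfrac12 |e_k - e_{k+1}|$ forces $d_k \in \{0,1\}$; therefore $d_k$ is odd if and only if $d_k = 1$ if and only if $e_k \neq e_{k+1}$. So the problem becomes: determine those $k$ for which consecutive Thue--Morse values disagree.

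Next I would expand the infinite product $P_2(x) = \prod_{j\geq 0}(1-x^{2^j})$ by choosing, for each term, either $1$ or $-x^{2^j}$. Since every nonnegative integer $k$ has a unique binary expansion, the coefficient of $x^k$ is $(-1)^{s_2(k)}$, where $s_2(k)$ denotes the number of $1$'s in the binary representation of $k$. Thus $e_k = (-1)^{s_2(k)}$, and $d_k = 1$ if and only if $s_2(k) \not\equiv s_2(k+1) \pmod 2$.

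The main step, and the only one requiring care, is the arithmetic of adding $1$ in binary. Write $v = v_2(k+1)$ for the $2$-adic valuation; equivalently, $k+1 = (2n+1)\cdot 2^v$ for some $n \in N$. Then the binary expansion of $k$ ends in exactly $v$ trailing $1$'s preceded by a $0$, and passing from $k$ to $k+1$ flips those $v$ trailing $1$'s to $0$'s and the preceding $0$ to $1$. Consequently
\begin{equation*}
s_2(k+1) - s_2(k) = 1 - v.
\end{equation*}
Therefore $s_2(k)$ and $s_2(k+1)$ have different parities if and only if $1-v$ is odd, i.e., if and only if $v$ is even.

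Finally I would match this with the definition of $J$: by construction, $k \in J$ exactly when $k+1 = (2n+1)\cdot 2^{2j}$ for some $n,j \in N$, which is the same as saying $v_2(k+1)$ is even. Combining the three equivalences ($d_k$ odd $\Leftrightarrow$ $e_k \neq e_{k+1}$ $\Leftrightarrow$ $v_2(k+1)$ even $\Leftrightarrow$ $k \in J$) yields the lemma. The only real obstacle is getting the carry computation $s_2(k+1) - s_2(k) = 1 - v$ cleanly; once that is in hand, everything else is just bookkeeping.
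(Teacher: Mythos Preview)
Your argument is correct. Each step checks: the reduction $d_k\in\{0,1\}$, the identity $e_k=(-1)^{s_2(k)}$ from the product $\prod_{j\ge 0}(1-x^{2^j})$ via unique binary expansion, the carry computation $s_2(k+1)-s_2(k)=1-v_2(k+1)$, and the identification of $J$ with $\{k:\ v_2(k+1)\text{ even}\}$ are all valid.

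As for comparison: the paper does not actually prove this lemma; it simply quotes it from Bugeaud and Han \cite{BH2013}. So your write-up supplies what the paper leaves out. The argument in \cite{BH2013} is in the same spirit---it also rests on the parity of $v_2(k+1)$---so there is no substantive methodological difference, only that you have written out the details explicitly and self-containedly here.
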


\begin{proof}[Proof of Theorem \ref{thm:tAPWW}]
Let $D(x)$ be the generating function of the period\-doubling sequence
$$
D(x)=\sum_{k\ge 0}d_{k}x^{k}=1+x^2+x^3+x^4+x^6+\cdots
$$
Let $k$ be a positive integer. By Leibniz formula for 
determinants \cite{Determinant}, 
the $t$-Hankel determinant $H_{k}(D,t)$ is equal to 
\begin{equation}\label{equ:tdet}
	\sum_{\sigma\in \Sym_{k}} t^{\fix(\sigma)}(-1)^{\inv(\sigma)}j_{0+\sigma(0)}j_{1+\sigma(1)}\cdots j_{k-1+\sigma(k-1)},
\end{equation}
where $\inv(\sigma)$  is the number of inversions of $\sigma$
and $\fix(\sigma)$  is the number of fixed points of $\sigma$ defined by
\begin{eqnarray*}
	\inv(\sigma) &=& \#\{(i,j) \mid 0\leq i<j\leq k-1, \sigma(i)>\sigma(j) \}; \\
	\fix(\sigma) &=& \#\{i \mid 0\leq i\leq k-1, \sigma(i)=i \}.
\end{eqnarray*}
The product 
$$j_{0+\sigma(0)}j_{1+\sigma(1)}\cdots j_{k-1+\sigma(k-1)}$$ 
is equal to 1 if $i+\sigma(i)\in J$ for $i=0,1,\cdots, k-1$, 
and is equal to 0 otherwise. 
Let $\sigma$ be a permutation such that $\sigma\not=\sigma^{-1}$.
We have $\inv(\sigma)=\inv(\sigma^{-1})$ and 
$\fix(\sigma)=\fix(\sigma^{-1})$. Accordingly, they have the same contribution
to summation (\ref{equ:tdet}), and can be deleted. Hence
\begin{equation}\label{equ:tdetInvolution}
H_{k}(D,t)\equiv
	\sum_{\sigma} t^{\fix(\sigma)}j_{0+\sigma(0)}j_{1+\sigma(1)}\cdots j_{k-1+\sigma(k-1)} \pmod2,
\end{equation}
where the sum is over the set of all involutions $\sigma$ on $N|_k$.
By Theorem~\ref{thm:key}, 
$$
H_{k}(D,t)\equiv \sum_{i=0}^{k}t^{k-2i}\mu(N|_{k},i,J)\equiv t^{k}\mu(N|_{k},0,J)=t^{k}.
$$
This achieves the proof.
\end{proof}


\section{Regular paperfolding sequence}\label{section:tPaperFolding}  
We define the infinite set
\[
R=\{(4k+1)2^{n}-1\mid n,k\in N\}=\{0,1,3,4,7,8,9,12,15,16,\cdots\}.\\
\]
Notice that, for each integer $m$ in the set $R$, there are unique 
integers $n$ and $k$ such that $(4k+1)2^n-1=m$.
Recall the regular paperfolding sequence ${\bf r}=(r_k)_{k=0,1,2,\ldots}$ defined by~(\ref{def:G02}). 
We have the following result.
\begin{lemma}
\label{lemma:S}
For each $k\ge 0$ the integer $r_{k}$ is equal to 1 if and only if $k$ is in $R$, and is equal to 0 otherwise.
\end{lemma}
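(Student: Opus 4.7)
The plan is to expand the generating function $G_{0,2}(x)$ term by term and read off the coefficient of $x^m$. For each fixed $n\ge 0$, the geometric series expansion gives
\begin{equation*}
\frac{x^{2^n-1}}{1-x^{2^{n+2}}}=\sum_{k\ge 0}x^{2^n-1+k\cdot 2^{n+2}}=\sum_{k\ge 0}x^{(4k+1)2^n-1}.
\end{equation*}
Summing over $n$ and comparing with $G_{0,2}(x)=\sum_{m\ge 0}r_m x^m$ yields, a priori,
\begin{equation*}
r_m=\#\{(n,k)\in N\times N\mid (4k+1)2^n-1=m\}.
\end{equation*}

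Next I would show that for every nonnegative integer $m$ the above set has cardinality at most $1$, so that $r_m\in\{0,1\}$ and $r_m=1$ exactly when $m\in R$. To this end, set $M=m+1$ and write the unique factorization $M=2^n s$ with $s$ odd. A pair $(n',k)$ satisfies $(4k+1)2^{n'}=M$ if and only if $2^{n'}$ equals the $2$-adic part of $M$ and $4k+1=s$. Thus $n'$ is forced to equal $n$, and a solution exists (necessarily unique) if and only if the odd number $s$ satisfies $s\equiv 1\pmod 4$, in which case $k=(s-1)/4$. Equivalently, a solution exists if and only if $m+1$ can be written as $(4k+1)2^n$, i.e.\ $m\in R$.

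Combining the two steps, $r_m=1$ when $m\in R$ and $r_m=0$ otherwise, which is the claim. The only slightly delicate point is the uniqueness of the representation $m+1=(4k+1)2^n$, but this is immediate from unique factorization once one observes that $n$ is pinned down as the $2$-adic valuation of $m+1$; after that, $4k+1$ is the odd part of $m+1$ and must lie in the congruence class $1\pmod 4$. No real obstacle is expected; the argument is a direct computation with the defining generating function.
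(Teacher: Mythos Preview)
Your proof is correct and follows essentially the same route as the paper: expand each summand $\dfrac{x^{2^n-1}}{1-x^{2^{n+2}}}$ as a geometric series to obtain $\sum_{n,k\ge 0}x^{(4k+1)2^n-1}$, and conclude. The only difference is that you spell out the uniqueness of the representation $m+1=(4k+1)2^n$ via the $2$-adic valuation, whereas the paper records this uniqueness as a remark immediately before the lemma and then simply says ``Thus the lemma holds.''
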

\begin{proof}
By definition of (\ref{def:G02}), we have 
\begin{align*}
G_{0,2}(x)=&\sum_{n\geq 0} r_{n} x^n = \sum_{n=0}^\infty \frac{x^{2^n-1}}{1-x^{2^{n+2}}}\\
	=&\sum_{n=0}^{\infty}x^{2^n-1}\left(\sum_{k\ge 0}(x^{2^{n+2}})^k\right)\\
=&\sum_{n,k\ge 0}x^{4k\cdot 2^{n}+2^{n}-1}.
\end{align*}
Thus the lemma holds.
\end{proof}

\begin{proof}[Proof of Theorem \ref{thm:tPaperFolding}]
As discussed in Section \ref{section:tAPWW}, the $t$-Hankel 
determinant $H_{k}({\bf r},t)$ is equal to 
\begin{equation}\label{equ:tret}
	\sum_{\sigma\in \Sym_{k}} t^{\fix(\sigma)}(-1)^{\inv(\sigma)}r_{0+\sigma(0)}r_{1+\sigma(1)}\cdots r_{k-1+\sigma(k-1)}.
\end{equation}
The product 
$$r_{0+\sigma(0)}r_{1+\sigma(1)}\cdots r_{k-1+\sigma(k-1)}$$ 
is equal to 1 if $i+\sigma(i)\in R$ for $i=0,1,\cdots, k-1$, 
and is equal to 0 otherwise.

Recall the three representations for permutations: 
the {\it one-line}, {\it two-line} and {\it product of disjoint cycles}. 
For example, we write
$$
\sigma\in\Sym_9=516280374=
\begin{pmatrix}
012345678\\ 516280374\\
\end{pmatrix}
= (0,5)(1)(2,6,3)(4,8)(7). 
$$
Consider a permutation $\sigma$ having
at least 4 fixed points, i.e., $\fix(\sigma)\ge 4$. 
It's easy to know that an even number $m$ is in $R$ if and only if $m\equiv 0\pmod 4$, so that all fixed points are even.
Consequently, there are at 
least 3 columms of type $\binom{odd}{odd}$ in the two-line representation of the permutation $\sigma$. 
Let 
$\binom{i_1}{j_1}$, $\binom{i_2}{ j_2}$ and $\binom{i_3}{ j_3}$ 
be the first three such columns.
By the Pigeonhole Principle, there are at least two numbers among $j_1,j_2,j_3$ which are congruent modulo 4. 
Without loss of generality, we assume that $j_1$ and $j_2$ are congruent modulo 4. (When all three numbers are congruent, we also choose $j_1$ and $j_2$). 
We define another permutation $\tau$ obtained from $\sigma$ by exchanging $j_1$ and $j_2$ in the bottom line. This procedure is reversible. And we have
 $\fix(\sigma)=\fix(\tau)$ and $\inv(\sigma)+\inv(\tau)\equiv 1 \pmod 2$. Then $(-1)^{\inv(\sigma)}+(-1)^{\inv(\tau)}=0$. Thus, we can detete  
 the pair $\{\sigma, \tau\}$ from the symmetry group $\Sym_k$. The value of  the $t$-Hankel determinant $H_{k}(\bf r,t)$ defined by (\ref{equ:tret}) doesn't change. 

After deleting all the permutations such that $\fix(\sigma)\geq 4$,
all remaining permutaions have at most $3$ fixed points.
Thus, the $t$-Hankel determinant $H_{k}(\bf r,t)$ is a polynomial in $t$ of degree less than or equal to 3.
\end{proof}

{\bf Acknowledgements}. The second author should like to thank Zhi-Ying Wen,
who invited him to Tsinghua University for a fruitful two-month visit.
Part of this research was done during that period. 

\vfill\eject

\bibliographystyle{plain}

\bibliography{\jobname}

\begin{thebibliography}{10}

\bibitem{Allouche1987}
J.-P Allouche.
\newblock Automates finis en th\'eorie des nombres.
\newblock {\em Expo. Math.}, 5:1987, 239--266.

\bibitem{APWW1998}
J.-P. Allouche, J.~Peyri\`ere, Z.-X. Wen, and Z.-Y Wen.
\newblock Hankel determinants of the {T}hue--{M}orse sequence.
\newblock {\em Ann. Inst. Fourier, Grenoble}, 48:1--27, 1998.

\bibitem{Bu2011}
Yann Bugeaud.
\newblock On the rational approximation to the thue-morse-mahler numbers.
\newblock {\em Ann. Inst. Fourier, Grenoble}, 61:2065--2076, 2011.

\bibitem{BH2013}
Yann Bugeaud and Guo-Niu Han.
\newblock A combinatorial proof of the non-vanishing of {H}ankel determinants
  of the thue--morse sequence.
\newblock {\em submitted, 16 pages}, 2013.

\bibitem{Coons2013}
Michael Coons.
\newblock On the rational approximation of the sum of the reciprocals of the
  fermat numbers.
\newblock {\em The Ramanujan Journal}, 30(1):39--65, 2013.

\bibitem{CoonsVrbik2012}
Michael Coons and Paul Vrbik.
\newblock An irrationality measure for regular paperfolding numbers.
\newblock {\em Journal of Integer Sequences}, 15(Article 12.1.6), 2012.

\bibitem{Ege2010}
{\"O}mer E{\u{g}}ecio{\u{g}}lu.
\newblock A prime sensitive {H}ankel determinant of {J}acobi symbol
  enumerators.
\newblock {\em Ann. Comb.}, 14(4):443--456, 2010.

\bibitem{ERR2008}
{\"O}mer E{\u{g}}ecio{\u{g}}lu, Timothy Redmond, and Charles Ryavec.
\newblock Evaluation of a special {H}ankel determinant of binomial
  coefficients.
\newblock In {\em Fifth {C}olloquium on {M}athematics and {C}omputer
  {S}cience}, Discrete Math. Theor. Comput. Sci. Proc., AI, pages 251--267.
  Assoc. Discrete Math. Theor. Comput. Sci., Nancy, 2008.

\bibitem{GWW2013}
Yingjun Guo, Wen Wu, and Zhixiong Wen.
\newblock On the irrationality exponent of the regular paperfolding numbers.
\newblock {\em arxiv.org/abs/1310.2138}, 2013.

\bibitem{Han2013}
Guo-Niu Han.
\newblock Hankel determinant calculus for the thue-morse and related sequences.
\newblock {\em submitted, 21 pages}, 2013.

\bibitem{Han2014}
Guo-Niu Han.
\newblock Hankel continued fraction and its applications.
\newblock {\em submitted, 22 pages}, 2014.

\bibitem{Kr1998}
Christian Krattenthaler.
\newblock Advanced determinant calculus.
\newblock {\em \sl S\'em. Lothar. Combin.}, Article B42q, 67 pages, 1998.

\bibitem{Layman2001}
John~W. Layman.
\newblock The {H}ankel transform and some of its properties.
\newblock {\em Journal of Integer Sequences}, 4, Article 01.1.5, 2001.

\bibitem{Lucas}
E.~Lucas.
\newblock Sur les congruences des nombres eul\'eriens et les coefficients
  diff\'erentiels des functions trigonom\'etriques suivant un module premier.
\newblock {\em Bull. Soc. Math. France}, pages 49--54, 1878.

\bibitem{Determinant}
Wikipedia.
\newblock Determinant.
\newblock {\em Revision 9 January 2014}.

\bibitem{WikipediaRP}
Wikipedia.
\newblock Regular paperfolding sequence.
\newblock {\em Revision September 30, 2013}.

\bibitem{Wimp2000}
Jet Wimp.
\newblock Hankel determinants of some polynomials arising in combinatorial
  analysis.
\newblock {\em Numerical Algorithms}, 24:179--193, 2000.

\end{thebibliography}

\end{document}